\newtheorem{theorem}{Theorem}
\newtheorem{lm}[theorem]{Lemma}
\newcommand{\abs}[1]{\lvert #1 \rvert}
\def\E{\mathbb{E}}
\newcommand{\Hh}{{\mathcal H}}
\newcommand{\D}{{\mathcal D}}
\newcommand{\C}{{\mathcal C}}
\newcommand{\A}{{\mathcal A}}
\newcommand{\B}{{\mathcal B}}
\def\Pr{\mathbb{P}}
\def\Var{\mathrm{Var}}
\def\diam{\mathrm{diam}}
\def\Dec{\mathsf{Dec}}
\def\Z{\mathbb{Z}}
\def\N{\mathbb{N}}
\def\Pp{\mathcal{P}}
\theoremstyle{definition}
\title{Metric decomposability theorems on sets of integers}
\author{Pierre-Yves Bienvenu}
\address{Institut für Analysis und Zahlentheorie\\TU Graz\\Kopernikusgasse 24\\8010 Graz Austria}
\email{bienvenu@tugraz.at}
\begin{document}

\begin{abstract}
A set $\mathcal{A}\subset \mathbb{N}$ is called additively decomposable
(resp. asymptotically additively decomposable) if there exist sets $\mathcal{B},\mathcal{C}\subset \mathbb{N}$ of cardinality at least two each
such that $\mathcal{A}=\mathcal{B}+\mathcal{C}$ (resp. $\mathcal{A}\Delta (\mathcal{B}+\mathcal{C})$ is finite). If none of these properties hold, the set $\A$ is called totally primitive.
We define $\mathbb{Z}$-decomposability analogously with subsets $\mathcal{A,B,C}$ of $\Z$.
Wirsing showed that almost all subsets of $\mathbb{N}$ are totally primitive.
In this paper, in the spirit of Wirsing, we study decomposability from
a probabilistic viewpoint. First, we show that almost all symmetric subsets of $\mathbb{Z}$ are $\mathbb{Z}$-decomposable. Then
we show that almost all small perturbations of the set of primes yield a totally primitive
set. Further, this last result still holds when the set of primes is replaced by the set of sums of two squares, which is by definition decomposable.
\end{abstract}
\maketitle
\begin{flushright}
\textit{In memoriam Eduard Wirsing.}
\end{flushright}

\section{Statement of the results}
This article is concerned with sum sets in the integers.
Given two subsets $\mathcal{A},\mathcal{B}$ of $\Z$, their sum set $\mathcal{A}+\mathcal{B}$ is the set $\{a+b : (a,b)\in \mathcal{A}\times \mathcal{B}\}$.
We denote $2\A=\A+\A$.
Further, we denote by $\mathcal{A}\sim \mathcal{B}$ the property that $\mathcal{A}\Delta \mathcal{B}$ is finite.
A set $\mathcal{C}\subset \N$ is called \textit{additively decomposable}
(resp. \textit{asymptotically additively decomposable}) if there exist sets $\mathcal{A},\mathcal{B}\subset\N$
 of cardinality at least two each
 such that
(resp. $\mathcal{C}\sim \mathcal{A}+\mathcal{B}$). 
A set which is not asymptotically additively decomposable is called
totally primitive.
Similarly, a set $\mathcal{C}\subset \Z$ is called \textit{additively $\Z$-decomposable} (resp \textit{asymptotically additively $\Z$-decomposable}) if there exist sets $\mathcal{A},\mathcal{B}\subset\Z$ of cardinality at least two each such that $\mathcal{C}= \mathcal{A}+\mathcal{B}$ (resp.
$\mathcal{C}\sim \mathcal{A}+\mathcal{B}$). 
%Since this paper will discuss no other notion of decomposability, we
%will abandon the adverbs and simply call such a set \textit{decomposable} or $\Z$-\textit{decomposable} respectively.
%Otherwise it is called indecomposable or $\Z$-indecomposable.

An old conjecture of Ostmann \cite[page 13]{ostmann} asserts that the set $\Pp$ of primes is totally primitive. In spite of serious efforts by numerous authors and
notable advances (see \cite{els,elsHarp} and the very recent \cite{gyori} on related problems), the problem remains unsolved.
The philosophy supporting this idea is that additive decomposability is a very rare property, so most sets occurring in number theory which are not specifically defined to be a sum set\footnote{However, examples such as the set of sums of two squares, which can be defined through a multiplicative property ($n$ is a sum of two squares if and only if $v_p(n)$ is even for every prime $p$ congruent to 3 modulo 4) or \cite[Example 2.2]{els} show that one must be careful with this philosophy.} should not have it.
A theorem of Wirsing \cite{wirsing} actually asserts that almost all sets
are totally primitive, where ``almost all'' refers to the construction of a random
subset of $\N$ by selecting each integer into the set with probability
$1/2$ independently of each other.

On the other hand, Ruzsa recently showed \cite{ruzsa} in the Number Theory Web Seminar that the widely believed Hardy-Littlewood
prime tuples conjecture implies that the signed set of primes
$\Pp\cup(-\Pp)$ is aymptotically additively $\Z$-decomposable, i.e. there exist sets $\mathcal{A},\mathcal{B}\subset \Z$ such that $\Pp\cup(-\Pp)\sim\mathcal{A}-\mathcal{B}$.
Our first result shows that this property is actually typical.
To state it,
let $(\xi_n)_{n\in\N}$ be a sequence of independent identically distributed Bernoulli variables satisfying $\Pr(\xi_n=1)=\Pr(\xi_n=0)=1/2$ for each $n\in\N$.
\begin{theorem}
Almost all symmetric subsets of $\Z$ are additively $\Z$-decomposable.
More precisely, let
 $\mathcal{D}=\{n\in\Z : \xi_{\abs{n}=1}\}$. Then $\Pr(\mathcal{D}\text{ is $\Z$-decomposable})=1$.
\end{theorem}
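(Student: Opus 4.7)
The plan is to construct, almost surely, sets $\mathcal{A},\mathcal{B}\subset\Z$ of cardinality at least two with $\mathcal{A}+\mathcal{B}=\mathcal{D}$. A natural first attempt is a fixed finite $\mathcal{A}$: taking $\mathcal{A}=\{-1,1\}$ and the maximal compatible $\mathcal{B}=(\mathcal{D}-1)\cap(\mathcal{D}+1)$ reduces $\mathcal{A}+\mathcal{B}=\mathcal{D}$ to the local condition that $d-2\in\mathcal{D}$ or $d+2\in\mathcal{D}$ for every $d\in\mathcal{D}$, which a direct Borel--Cantelli argument shows fails for infinitely many $d$ almost surely. Analogous computations (where the failure at each candidate $d$ has positive probability depending only on $|\mathcal{A}|$) rule out every a priori finite choice of $\mathcal{A}$, so the decomposition must be adaptive to $\mathcal{D}$ and at least one of $\mathcal{A},\mathcal{B}$ must be infinite.

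I would therefore build $\mathcal{A}$ and $\mathcal{B}$ greedily, both symmetric and both containing $0$. Conditioning on the probability-$1/2$ event $0\in\mathcal{D}$ (the complementary case reduces to this by adjoining $0$, a finite symmetric modification), initialize $\mathcal{A}_0=\mathcal{B}_0=\{0\}$ and enumerate $\mathcal{D}\cap\N_{>0}=\{d_1<d_2<\cdots\}$. At stage $k$, if $d_k\notin\mathcal{A}_{k-1}+\mathcal{B}_{k-1}$, adjoin a symmetric pair $\{-a_k,a_k\}$ to $\mathcal{A}$ or $\mathcal{B}$, where $a_k=d_k-b$ for some $b$ in the opposite current set (so that $d_k$ is covered) and such that every $\pm a_k+b'$ with $b'$ in the opposite set lies in $\mathcal{D}$ (so that $\mathcal{A}_k+\mathcal{B}_k\subset\mathcal{D}$). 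The key trick is to choose $a_k$ sufficiently far from all absolute values consulted at earlier stages, so that the finitely many constraints $\xi_{|\pm a_k+b'|}=1$ become fresh, independent Bernoulli events.

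The main obstacle is the quantitative control of this greedy. As $k$ increases, so does the number of constraints per candidate (on the order of $|\mathcal{B}_{k-1}|$ when extending $\mathcal{A}$, or vice versa), which decreases the per-candidate success probability; meanwhile the number of candidates $a_k=d_k-b$ only grows linearly with the opposite set. The remedy is to alternate extensions between $\mathcal{A}$ and $\mathcal{B}$, keeping both of size $O(\log k)$, and to search for $a_k$ in a range of length at least $k^2$ pushed past all previously-consulted indices. A union bound then makes the per-stage failure probability summable in $k$, so by Borel--Cantelli all but finitely many stages succeed almost surely, and the finite remainder can be absorbed by a bounded number of additional extensions using the same mechanism. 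A secondary delicacy is the coupling $\xi_{|n|}=\xi_{|-n|}$ imposed by the symmetry of $\mathcal{D}$, which can create dependence between conditions that would otherwise be independent; this is avoided by always choosing $|a_k|$ strictly beyond the absolute values of all previously-used elements.
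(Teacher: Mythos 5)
Your overall plan---a greedy construction of two infinite summands, justified by Borel--Cantelli---is the right instinct and is close in spirit to the paper's argument, but the specific greedy step you describe does not work, and the quantitative claims meant to rescue it are not available. The problem is at the stage where $d_k$ must be covered: you insist that the new element $a_k$ satisfy $a_k=d_k-b$ for some $b$ in the \emph{current} opposite set, so the candidate set for $a_k$ is $d_k-\B_{k-1}$, of size $\abs{\B_{k-1}}$; it is not a free range, so you cannot ``search for $a_k$ in a range of length at least $k^2$''---the only freedom is which $b$ to use. Each candidate must satisfy about $2\abs{\B_{k-1}}$ fresh fair-coin conditions (the sums $\pm a_k+b'$), so the expected number of admissible candidates is of order $\abs{\B_{k-1}}\cdot 4^{-\abs{\B_{k-1}}}$, which tends to $0$; hence almost surely all candidates fail at infinitely many stages, and a failed stage leaves $d_k$ permanently uncovered. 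The claim that both sets can be kept of size $O(\log k)$ is also impossible: covering $d_1,\dots,d_k$ forces $\abs{\A_k}\cdot\abs{\B_k}\geq k$, so at least one summand has size $\geq\sqrt{k}$, and with sets of logarithmic size all but $O(\log^2k)$ of the first $k$ elements of $\D$ would be uncovered.

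The repair---and this is what the paper does---is to adjoin a fresh element to \emph{both} summands at every stage, so that the pair $(a,b)$ with $a-b=d_{k+1}$ has one free parameter $x=a$ ranging over all integers rather than over a finite set. The paper represents the symmetric set as a difference set $\D=\A-\B$ with $\A,\B\subset\N$; the condition that adjoining $x$ to $\A$ and $x-d_{k+1}$ to $\B$ keeps all differences inside $\D$ reads $x+\Hh\subset\D$ for the single finite tuple $\Hh=(-\B_k)\cup(-d_{k+1}-\A_k)$ (using the symmetry of $\D$ to flip one of the two families of conditions). The probabilistic work is then done once and for all, \emph{before} the construction: for each fixed finite $\Hh$ the events $n_j+\Hh\subset\D$ along a suitably spaced sequence $n_j$ are independent with probability $2^{-\abs{\Hh}}$, so by the second Borel--Cantelli lemma infinitely many occur almost surely, and intersecting over the countably many tuples gives the ``finite tuples property'' almost surely. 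The greedy construction is afterwards purely deterministic, with an infinite supply of candidates at every stage, which eliminates both the candidate-shortage problem and the delicate conditioning issues your interleaved randomness would otherwise have to confront.
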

This theorem will follow from a finite tuples property that we will prove. Regarding subsets of $\N$, the same finite tuple
property yields the following.
\begin{theorem}
\label{cheapMrr}
Almost all subsets of $\N$ contain a sumset $\mathcal{A}+\mathcal{B}$ where both summands are infinite.
More precisely, let
$\mathcal{C}=\{n\in\N : \xi_n=1\}$. Then with probability 1,
$\mathcal{C}$ contains
 a sumset $\mathcal{A}+\mathcal{B}$ where both summands are infinite.
%=\mathcal{C}\cup -\mathcal{C}
\end{theorem}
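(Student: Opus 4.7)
The plan is to construct, almost surely, infinite sets $\A=\{a_1<a_2<\cdots\}$ and $\B=\{b_1<b_2<\cdots\}$ with $\A+\B\subset\C$, by building them one pair at a time via a deterministic greedy rule whose success at each step is guaranteed by a one-step extension lemma.

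The extension lemma I would prove reads as follows: for every fixed pair of finite subsets $\A_k,\B_k\subset\N$ of the same cardinality $k$, we have
$$
\Pr\bigl(\A_k+\B_k\subset\C\text{ and no }(a,b)\text{ with }a>\max\A_k,b>\max\B_k\text{ satisfies }(\A_k\cup\{a\})+(\B_k\cup\{b\})\subset\C\bigr)=0.
$$
To see this, I would select candidate pairs $(\alpha_i,\beta_i)_{i\geq 1}$ growing so fast that the $2k+1$ ``new'' sums
$$
S_i=\{\alpha_i+\beta_i\}\cup(\alpha_i+\B_k)\cup(\A_k+\beta_i)
$$
are pairwise distinct within each $S_i$, pairwise disjoint as $i$ varies, and disjoint from $\A_k+\B_k$. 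Conditionally on $\A_k+\B_k\subset\C$, the events $\{S_i\subset\C\}$ are then independent Bernoulli events of probability $2^{-(2k+1)}$, and the second Borel--Cantelli lemma guarantees that infinitely many of them occur almost surely, each providing a valid extension.

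Since the set of finite pairs $(\A_k,\B_k)$ is countable, a union bound yields that with probability one, \emph{every} finite configuration $\A_k+\B_k\subset\C$ with $|\A_k|=|\B_k|$ admits at least one such extension. Starting from any $a_1,b_1\in\N$ with $a_1+b_1\in\C$---which exists almost surely since $\C$ is almost surely infinite---I would iterate the extension to build a nested sequence $(\A_k,\B_k)_{k\geq 1}$; the unions $\A=\bigcup_k\A_k$ and $\B=\bigcup_k\B_k$ are then the desired infinite summands.

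The proof presents no serious obstacle; the only point to verify with care is that the candidate pairs $(\alpha_i,\beta_i)$ can be chosen so that the events $\{S_i\subset\C\}$ are genuinely independent, which reduces to ensuring pairwise disjointness of the $S_i$. Any sufficiently rapidly growing choice---for instance, taking $\alpha_{i+1}$ and $\beta_{i+1}$ much larger than $\alpha_i+\beta_i$ relative to $\max(\A_k\cup\B_k)$---works. This is the same finite-tuple extension mechanism that feeds Theorem~1, applied now in $\N$ rather than in $\Z$ and iterated infinitely often.
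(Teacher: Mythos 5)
Your proposal is correct and uses essentially the same mechanism as the paper: the second Borel--Cantelli lemma applied to infinitely many pairwise disjoint candidate configurations to guarantee extendability, a countable intersection over all finite configurations, and then a greedy construction of nested pairs $(\A_k,\B_k)$. The only cosmetic difference is that the paper factors the probabilistic input through a standalone ``finite tuples property'' (extending $\A$ and $\B$ one element at a time, alternately), whereas you prove the extension statement directly for the two-parameter events $S_i\subset\C$ and add a pair $(a,b)$ at each step.
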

A theorem of Granville \cite{granville} shows that the Hardy-Littlewood conjecture implies that set of primes contains  a sumset $\mathcal{A}+\mathcal{B}$ where both summands are infinite. 
Thus what is known of the primes, resp. the signed primes, under the Hardy-Littlewood conjecture, is true of almost every subset of $\N$, resp. almost every symmetric subset of  $\Z$.
Further, let us observe that a much stronger statement than Theorem \ref{cheapMrr}
holds, namely that every dense subset of $\N$ contains a sumset $\mathcal{A}+\mathcal{B}$ where both summands are infinite; this was conjectured by Erd\H{o}s and proven by Moreira, Richter and Robertson \cite{mrr} (see also \cite{hostSumset} for a simpler proof). Almost every subset being dense, this statement implies Theorem \ref{cheapMrr}, which may be thought of as a ``cheap'' version
of the theorem of  Moreira, Richter and Robertson.

Regarding decomposability of subsets of $\N$, we consider new probability 
distributions whose mass is concentrated ``near'' a fixed set of interest such as the set of the primes.
First we introduce a standard notational convention: a set (or equivalently an increasing sequence) of integers is denoted by a calligraphic letter (e.g. $\mathcal{A}$), its elements are denoted by the corresponding
lower case letter (e.g. $\mathcal{A}=\{a_n : n\in\N\}$), its counting function by the corresponding upper case letter (e.g. $A(x)=\abs{\mathcal{A}\cap [1,x]}$).

Let $\mathcal{S}$ be an infinite set and define a function $f=f_\mathcal{S}$ by $f(x)=x/S(x)$. 
We make the following two hypotheses.
\begin{enumerate}[label={\bfseries S\arabic*}]
\item $f(x)$ tends to infinity as $x$ does.
\item The number of $s_k\leq x$ satisfying $s_{k+2}-s_k\leq h$ is $o_h(S(x)/\log f(x))$.
\end{enumerate}
%Let $p_n$ denote the the $n$th prime.
Then we fix a sequence $(\delta_n)_{n\in\N}$ 
which has the following properties.
\begin{enumerate}[label={\bfseries D\arabic*}]
\item $\delta_n \geq 2$ for all but $o(x)$ integers $n\leq x$.
\item There exists an integer $\ell$ such that $\sum_{s_k\leq x}\prod_{i=0}^{\ell-1}\delta_{k+i}^{-1}=o(S(x)/\log f(x))$
\item $1\leq \delta_n\leq (s_{n+1}-s_{n-1})/2$ for all $n\geq 2$.
\end{enumerate}
An obvious consequence of {\bfseries D2} is that $\limsup_n \delta_n=\infty$.
Also we may assume that $\ell$ is even.
We will often denote $\delta_n^{-1}$ by $\eta_n$.
Moreover we consider a sequence $\varepsilon_n$ of independent random integers such that 
%$\varepsilon_1=\varepsilon_2=\varepsilon_3=0$
%and for $n\geq 4$,
\begin{enumerate}[label={\bfseries E\arabic*}]
\item $-(s_n-s_{n-1})/2< \varepsilon_n\leq (s_{n+1}-s_n)/2$ for all $n\in\N$.
\item For every $k\in\Z$, we have $\Pr(\varepsilon_n=k)\leq\delta_n^{-1}$.
\end{enumerate}
Thanks to {\bfseries D3}, such a sequence exists: we may take $\varepsilon_n$ to be uniformly distributed on the interval of integers $(-(s_n-s_{n-1})/2, (s_{n+1}-s_n)/2]$ for all $n\in\N$, for instance.
Note that {\bfseries E2} implies that 
%at least 
%$2\delta_n-1$ integers
%are reached by $\varepsilon_n$ 
%with probability at least $1/(2\delta_n)$ so that 
$\Var(\varepsilon_n)\gg \delta_n^2$, so we require a certain amount of dispersion; otherwise the problem is too close to the deterministic question of whether $\mathcal{S}$ itself is asymptotically additively decomposable, which is completely different.
%It will be convenient to assume that $\delta_n \geq 2$ for all $n\geq 4$.

Finally we consider the random sequence $c_n=\varepsilon_n+s_n$ and the random set $\mathcal{C}_n=\{c_n : n\in\N\}\subset\N$.
Observe that the definition of $\varepsilon_n$ ensures that $c_n<c_{n+1}$ for all $n$,
so that the sequence $\varepsilon_n$ uniquely determines $\mathcal{C}$.

\begin{theorem}
%Let $(\delta_n)_{n\in\N}$ be an increasing sequence of real numbers that tend to infinity arbitrarily slowly.
%Let $(\varepsilon_n)_{n\in\N}$ be a sequence of pairwise independent random variables, where $\varepsilon_n$ is uniformly distributed over $[-\delta_n,\delta_n]$ for each $n\in\N$. 
%Consider $\mathcal{C}=\{p_n+\varepsilon_n\}$ where $(p_n)_{n\N}=(2,3,5,\ldots)$ is the sequence of the primes, and
%$D=\{s_n+\varepsilon_n\}$ where $s_n$ is the sequence of sums of two squares.
The set $\mathcal{C}$ is almost surely totally primitive.
\end{theorem}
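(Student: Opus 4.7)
The plan is Wirsing-style. Any hypothetical asymptotic decomposition $\C\sim\A+\B$ with $\abs{\A},\abs{\B}\geq 2$ forces $\C$ to contain many arithmetic patterns---namely, many translates of a fixed finite difference set---and the independence of the perturbations $\varepsilon_n$, combined with the dispersion guaranteed by \textbf{E2}, should make such patterns almost surely too scarce to exist.

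\emph{Reduction to a countable family of patterns.} Suppose $\C\sim\A+\B$. Since $\C$ is infinite, at least one summand is infinite; up to symmetry assume $\abs{\B}=\infty$. Set $r=\min(\abs{\A},\ell)\geq 2$, pick $a_1<\dots<a_r$ in $\A$, and write $d_j=a_j-a_1$. Cofinitely many $b\in\B$ satisfy $a_1+b,\dots,a_r+b\in\C$, producing $\gg \abs{\B\cap[1,N]}$ elements $c\leq N$ in $\C$ with $c+d_j\in\C$ for all $j=2,\dots,r$. Since $\abs{\A(N)}\abs{\B(N)}\gg S(N)$, the lower bound on the number of such $c$ (for one of the two symmetric choices of summand) exceeds any bound of the form $o(S(N)/\log f(N))$. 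It therefore suffices to prove: for every $r\in\{2,\dots,\ell\}$ and every $\vec d\in\Z^{r-1}$,
\[
\#\{n:c_n\leq N,\;c_n+d_j\in\C\text{ for all }j\}\;=\;o(S(N)/\log f(N))\qquad\text{a.s.\ as }N\to\infty.
\]
Since $\Z^{r-1}$ is countable, a countable union over $\vec d$ and over $r\in\{2,\dots,\ell\}$ will handle every possible decomposition.

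\emph{Expected-count estimate.} Fix $\vec d$. Writing the event $c_n+d_j\in\C$ as $c_{m_j}-c_n=d_j$ for some index $m_j$, hypothesis \textbf{E1} forces $s_{m_j}$ to lie in a window of length $O(s_{n+1}-s_{n-1})$ around the unique $k$ with $s_k\approx s_n+d_j$. By \textbf{S2}, at most $o(S(N)/\log f(N))$ indices $n$ are \emph{clustered} in the sense that these windows contain more than one element of $\mathcal{S}$; these are absorbed into an error term. For the remaining $n$ the admissible tuple $\vec m$ is essentially unique, and independence of the $\varepsilon_i$'s together with the mass bound \textbf{E2} gives joint probability at most $\prod_{j}\eta_{m_j}$. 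Summing over $n$, the main contribution reduces to a sum of the form
\[
\sum_{s_k\leq N}\prod_{i=0}^{\ell-1}\delta_{k+i}^{-1},
\]
which by \textbf{D2} is $o(S(N)/\log f(N))$.

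\emph{Concluding step, and main obstacle.} Markov's inequality along a sparse subsequence of scales $N_t$---chosen so that $\sum_t 1/\log f(N_t)<\infty$, which is possible because $f(x)\to\infty$---yields summable tail probabilities, and Borel--Cantelli upgrades the expected bound to an almost-sure $o(S(N))$ bound for each fixed $\vec d$. A countable union over $\vec d$ then rules out every decomposition, so $\C$ is almost surely totally primitive. The technical heart of the argument is the expected-count estimate: one must organise the sum over admissible index tuples $\vec m$ so as to reproduce, up to constants, the consecutive product $\prod_{i=0}^{\ell-1}\delta_{k+i}^{-1}$ appearing in \textbf{D2}, and simultaneously absorb the clustered contribution via \textbf{S2}. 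Handling the case $r<\ell$ (fewer constraints than factors in \textbf{D2}), and ensuring that the bound is uniform enough in $\vec d$ for the Borel--Cantelli step, are the most delicate points of the proof.
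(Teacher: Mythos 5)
Your pattern-counting argument is essentially the paper's treatment of one of three cases, but the reduction that is supposed to make it cover \emph{every} decomposition has two genuine gaps, and the paper needs two additional, structurally different arguments to close them.

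First, the claim that a decomposition $\C=\A+\B$ forces one summand to have counting function exceeding any bound of the form $o(S(N)/\log f(N))$ is false. From $C(N)\leq A(N)B(N)$ and $C(N)\asymp S(N)$ you only get $\max(A(N),B(N))\gg\sqrt{S(N)}$, which for $\mathcal{S}=\Pp$ is about $\sqrt{N/\log N}$ --- far below the threshold $S(N)/\log f(N)\asymp N/(\log N\log\log N)$ that your expected-count estimate can beat. Decompositions in which both summands are sparse (say $A(x)+B(x)<\frac{\log 2}{2}\frac{x}{f(x)\log f(x)}$ infinitely often) produce too few patterns for the first-moment method to see them. The paper handles these by a separate entropy argument: there are at most $r\binom{2x}{r}$ candidate sets $\mathcal{A}+\mathcal{B}$ with $A(x)+B(x)<r$, while hypothesis \textbf{D1} forces each realization of $\C\cap[0,x]$ to have probability at most $2^{-(1+o(1))x/f(x)}$, and the union bound closes with the stated choice of $r$. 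You never invoke \textbf{D1} anywhere, which is a sign this case is missing from your proof.

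Second, your treatment of a finite summand does not work. Taking $r=\min(\abs{\A},\ell)$ elements when $\abs{\A}<\ell$ yields only $r-1<\ell-1$ independent collision constraints, so the joint probability is a product of fewer than $\ell$ factors $\eta_{m_j}$, and \textbf{D2} --- which controls only products of $\ell$ consecutive $\eta$'s --- gives nothing. Concretely, for $\delta_n\asymp(\log\log n)^\iota$ with $\iota$ small (the regime of Theorem 4), $\sum_{s_k\leq N}\eta_k\eta_{k+1}\asymp S(N)(\log\log N)^{-2\iota}$ is not $o(S(N)/\log f(N))$ when $2\iota<1$. You flag this case as ``delicate,'' but it is not a technicality: a different idea is required. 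The paper's route is to note that if $\B$ is finite with maximal gap $H$, then every element $d_k$ of $\A+\B$ satisfies $\min(d_{k+1}-d_k,d_k-d_{k-1})\leq H$, whereas $\Pr(\min(c_{k+1}-c_k,c_k-c_{k-1})\leq H)\leq H^3\eta_k(\eta_{k-1}+\eta_{k+1})$ has infimum $0$ over $k$ because $\limsup_n\delta_n=\infty$; hence almost surely $\C$ has unbounded bilateral gaps and admits no decomposition with a finite summand. Your third case (both summands infinite and dense enough) does match the paper's argument for $\Dec_3$, including the use of \textbf{S2} to discard clustered indices and \textbf{D2} for the main term; the paper concludes there with a plain $\inf_x$ over the events $\Dec_3(x)$ rather than your Borel--Cantelli along sparse scales, which also sidesteps the uniformity-in-$\vec d$ issue you worry about.
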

The following theorem shows that we can take $\mathcal{S}$ to be the
set of the primes or the set of the sums of two squares.
Basically, it suffices to disturb the $n$th prime or sum of two squares by a random
integer of standard deviation a small power of $\log\log n$.
\begin{theorem}
The set $\mathcal{P}$ of the primes and the set $2\mathcal{Q}$ of the sums of two squares fulfill the hypotheses {\bfseries S1,S2}.
When $\mathcal{S}$ is either of these two sets and the sequence $\delta$ satisfies $\delta_n\leq (s_{n+1}-s_{n-1})/2$ for all $n\in\N$ and
$\delta_n\gg \min((s_{n+1}-s_{n-1})/2,(\log\log n)^\iota )$ for $n\geq 2$  and
some $\iota >0$ arbitrarily small, the properties {\bfseries D1-3} hold.
\end{theorem}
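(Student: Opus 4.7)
The plan is to verify each of the six properties in turn, with upper-bound sieve estimates for close pairs in $\mathcal{S}$ as the common input. Hypothesis S1 is immediate: by the prime number theorem $P(x)\sim x/\log x$, whence $f_{\mathcal{P}}(x)\sim\log x$, and by Landau's theorem $2Q(x)\sim Kx/\sqrt{\log x}$, whence $f_{2\mathcal{Q}}(x)\sim K^{-1}\sqrt{\log x}$. In particular $\log f(x)\asymp\log\log x$ in both cases. For S2 I would invoke the Selberg (or Brun) sieve for pairs, uniformly in the shift: Bombieri--Davenport gives $\sum_{1\leq d\leq h}\#\{p\leq x:p+d\in\mathcal{P}\}\ll hx/\log^{2}x$, and an analogous sieve upper bound for the multiplicatively defined set $2\mathcal{Q}$ (in the spirit of Landau, Hooley and Iwaniec) gives $\sum_{1\leq d\leq h}\#\{n\leq x:n,n+d\in 2\mathcal{Q}\}\ll hx/\log x$. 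Since each $k$ with $s_{k+2}-s_k\leq h$ yields a pair $(s_k,s_{k+2})$ with difference $\leq h$, these estimates at fixed $h$ give the required $o_h(S(x)/\log f(x))$ bound in both cases.

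Property D3 is built into the hypothesis. For D1, write $L_n:=(\log\log n)^{\iota}$ and $C_n:=(s_{n+1}-s_{n-1})/2$. Since $L_n\to\infty$, the lower bound $\delta_n\gg\min(C_n,L_n)$ forces $C_n<C_0$ for some absolute constant whenever $\delta_n<2$. The number of such indices $n\leq x$ is at most the count of $j$ with $s_j\leq s_x$ and $s_{j+1}-s_{j-1}\leq 2C_0$, which by the fixed-$h$ sieve bound above is $O(s_x/\log^{2}s_x)=o(x)$ for $\mathcal{P}$ and $O(s_x/\log s_x)=o(x)$ for $2\mathcal{Q}$.

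The bulk of the work is D2. For each $k$ with $s_k\leq x$, set $I(k):=\{0\leq i<\ell:C_{k+i}<L_{k+i}\}$. For $i\notin I(k)$ the hypothesis yields $\delta_{k+i}\gg L_{k+i}\asymp L_k$ and hence $\delta_{k+i}^{-1}\ll L_k^{-1}$, while for $i\in I(k)$ we use $\delta_{k+i}\geq 1$; consequently $\prod_{i=0}^{\ell-1}\delta_{k+i}^{-1}\ll L_k^{-(\ell-|I(k)|)}$. I would split $\sum_{s_k\leq x}$ according to whether $I(k)=\emptyset$. The contribution from $I(k)=\emptyset$ is $\ll\sum_{k\leq S(x)}L_k^{-\ell}\ll S(x)(\log\log x)^{-\ell\iota}$ (since $L_k$ is essentially constant on the bulk of $[1,S(x)]$), which is $o(S(x)/\log f(x))$ provided $\ell\iota>1$. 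The contribution from $I(k)\neq\emptyset$ is bounded term-by-term by $1$, and the number of such $k$ is at most $\ell$ times the count of $j$ with $s_j\leq x+O(1)$ and $s_{j+1}-s_{j-1}\leq 2(\log\log x)^{\iota}$; applying the uniform-in-$h$ sieve estimate with $h=2(\log\log x)^{\iota}$ gives $\ll(\log\log x)^{\iota+1}x/\log^{2}x$ for $\mathcal{P}$ and $\ll(\log\log x)^{\iota+1}x/\log x$ for $2\mathcal{Q}$, both of which are $o(S(x)/\log f(x))$. Taking $\ell$ to be the smallest even integer exceeding $1/\iota$ closes the argument.

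The main obstacle I anticipate is locating (or reproving) the sieve upper bound for pairs of sums of two squares with the explicit uniformity in the shift $d$ needed to feed both S2 for $2\mathcal{Q}$ and the $I(k)\neq\emptyset$ piece of D2: for primes the corresponding Bombieri--Davenport input is classical, but its analogue for $2\mathcal{Q}$, while well within reach of Selberg's sieve applied to the indicator of $2\mathcal{Q}$, requires some care to state and cite in the precise form used here.
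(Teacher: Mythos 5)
Your proposal is correct and follows essentially the same route as the paper: the prime number theorem and Landau's theorem for \textbf{S1}, a Selberg-sieve upper bound for shifted pairs, uniform in the shift, summed over shifts up to $h$ (and applied with $h\asymp(\log\log x)^{\iota}$) for \textbf{S2} and for the exceptional indices in \textbf{D1--D2}, and the choice of $\ell$ just above $1/\iota$ to close \textbf{D2}. The paper merely compresses the \textbf{D2} bookkeeping that you spell out, and for the pair correlation of sums of two squares it cites Nowak's sieve estimate, which supplies exactly the uniform-in-the-shift bound you flag as the main outstanding input.
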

In this sense, almost every small perturbation of either of these two sets is totally primitive.
Regarding sums of two squares, this statement is a kind of inverse Atkin theorem: Atkin \cite{atkin} proved that for almost every small perturbation
$\mathcal{Q'}$ of the set $\mathcal{Q}$ of the squares, the sumset $2\mathcal{Q'}$ is dense, in sharp contrast with $2\mathcal{Q}$.
Here we perturb the sumset instead of perturbing the summands, and lose almost surely the decomposability property.

\section{Proof of Theorems 1 and 2}
Both results rely on the \textit{finite tuples property}, which is the property
 of a set $\mathcal{C}\subset\Z$ such that for any
finite $\mathcal{H}\subset \Z$, there exist infinitely many $n\in \N$ such that $n+\mathcal{H}\subset \C$.

Here let $(\xi_n)_{n\in\N}$ be a sequence of independent identically distributed Bernoulli variables satisfying $\Pr(\xi_n=1)=\Pr(\xi_n=0)=1/2$ for each $n\in\N$ and let 
$\mathcal{C}=\{n\in\N : \xi_n=1\}$.

\begin{lm}
Almost all subsets of $\N$, and therefore almost all symmetric subsets of $\Z$, have the finite tuples property. That is, with probability 1, $\C$
and therefore $\C\cup-\C$ have the finite tuples property.
\end{lm}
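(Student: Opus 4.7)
The plan is to establish the lemma by a direct second Borel--Cantelli argument applied to a well-chosen independent subsequence of events, and then to take a countable union over all finite configurations $\mathcal{H}$.

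First, fix a finite set $\mathcal{H}=\{h_1,\dots,h_k\}\subset\Z$ and let $d=\mathrm{diam}(\mathcal{H})+1$. For each integer $n$ large enough that $n+\min\mathcal{H}\geq 1$, define the event $E_n=\{n+\mathcal{H}\subset\mathcal{C}\}$. Since $\mathcal{C}$ is defined through the independent Bernoulli variables $\xi_m$, each $E_n$ is a conjunction of $k$ events of the form $\xi_{n+h_i}=1$, so $\Pr(E_n)=2^{-k}$. The events $E_n$ are not globally independent because two events $E_n$, $E_{n'}$ can involve the same variable $\xi_m$ when $\abs{n-n'}\leq d$. However, if one restricts attention to the subsequence $E_{n_j}$ with $n_j=n_0+jd$ for $j\in\N$ (and $n_0$ large), the events involve disjoint blocks of Bernoulli variables and are therefore mutually independent.

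Next, since $\sum_{j\in\N}\Pr(E_{n_j})=\sum_{j\in\N}2^{-k}=\infty$, the second Borel--Cantelli lemma gives $\Pr(E_{n_j}\text{ occurs for infinitely many }j)=1$. In particular, with probability one there exist infinitely many $n$ with $n+\mathcal{H}\subset\mathcal{C}$, which is exactly the finite tuples property for $\mathcal{H}$. Since the collection of finite subsets of $\Z$ is countable, a countable union of null sets is null, and therefore almost surely the finite tuples property holds simultaneously for every finite $\mathcal{H}\subset\Z$.

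Finally, to deduce the symmetric version, one observes that $\mathcal{C}\cup(-\mathcal{C})\supset\mathcal{C}$, so any $n\in\N$ for which $n+\mathcal{H}\subset\mathcal{C}$ trivially satisfies $n+\mathcal{H}\subset\mathcal{C}\cup(-\mathcal{C})$; the finite tuples property for $\mathcal{C}$ thus transfers to $\mathcal{C}\cup(-\mathcal{C})$ at no extra cost. There is essentially no obstacle here: the only point requiring any care is isolating the independence of the $E_{n_j}$, which is immediately handled by the spacing $d>\mathrm{diam}(\mathcal{H})$. All the rest is bookkeeping (shifting $n_0$ to make $n+\min\mathcal{H}\geq 1$ and enumerating the finite subsets of $\Z$).
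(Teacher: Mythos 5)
Your argument is correct and is essentially the paper's own proof: both fix a finite $\mathcal{H}$, pass to the arithmetic progression $n_j$ with common difference $\diam(\mathcal{H})+1$ to get independent events of probability $2^{-k}$, apply Borel--Cantelli, and then intersect over the countably many finite tuples. The only cosmetic difference is that you invoke full mutual independence of the $E_{n_j}$ where the paper only claims pairwise independence; either suffices for the divergence form of Borel--Cantelli.
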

\begin{proof}
Fix a particular finite non empty set $\Hh\subset \Z$ of cardinality $k$.
Then for any given $n\in\N$ large enough, the probability that a random subset $\C\subset\N$ is such that $n+\Hh\subset \C$ is $2^{-k}$. One can extract an infinite sequence $n_j$ of integers such that the events
$n_j+\Hh\subset C$ are pairwise independent, for instance $n_j=j(\diam\Hh +1)$ where $\diam$ denotes the diameter (difference between maximum and minimum). 
Therefore, by the Borel-Cantelli lemma, with probability 1, there exist infinitely many $n$ such that $n+\Hh\subset \C$. 
Now this is true for any particular $\Hh$, but since there are only countably many 
finite tuples
$\Hh\subset \N$, we can take the intersection and conclude.
\end{proof}
\begin{lm}
Any symmetric subset of $\Z$ satisfying the finite tuples property is additively $\Z$-decomposable; it is in fact the difference set of two infinite subsets of $\N$.
\end{lm}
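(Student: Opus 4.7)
The plan is to construct $\A,\B\subset\N$ as strictly increasing sequences via a greedy inductive procedure driven by the finite tuples property. First I would fix an enumeration $\D=\{e_1,e_2,\ldots\}$, which is possible because the finite tuples property applied to $\Hh=\{0\}$ already forces $\D$ to be infinite. I would then produce nested finite sets $\A_n=\{a_1,\ldots,a_n\}\subset\N$ and $\B_n=\{b_1,\ldots,b_n\}\subset\N$ while maintaining the two invariants $\A_n-\B_n\subset\D$ and $\{e_1,\ldots,e_n\}\subset\A_n-\B_n$.

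For the inductive step, with $\A_{n-1}$ and $\B_{n-1}$ given, I would look for a large integer $t\in\N$ and declare $a_n:=t$ and $b_n:=t-e_n$. The new difference $a_n-b_n$ is then automatically $e_n\in\D$; the constraints $a_n-b_j\in\D$ for $j<n$ become $t+(-b_j)\in\D$; and the constraints $a_i-b_n\in\D$ for $i<n$ become, after using the symmetry $\D=-\D$, the conditions $t+(-a_i-e_n)\in\D$. All of these requirements reduce to a single finite tuple condition
\[
t+\Hh_n\subset\D,\qquad \Hh_n:=\{-b_j : 1\le j<n\}\cup\{-a_i-e_n : 1\le i<n\}.
\]
The finite tuples property furnishes infinitely many such $t$, so I can moreover impose that $t$ exceed every previously chosen $a_i$ and $b_j$ and be large enough that $b_n=t-e_n$ is a positive integer strictly larger than all $b_j$, $j<n$. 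This preserves the strict monotonicity of both sequences and keeps them inside $\N$.

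Setting $\A:=\bigcup_n\A_n$ and $\B:=\bigcup_n\B_n$, I obtain two infinite subsets of $\N$. The first invariant gives $\A-\B\subset\D$, the second forces every $e_n$ to appear in $\A-\B$, so $\D=\A-\B$. Rewriting $\A-\B=\A+(-\B)$ then exhibits $\D$ as a $\Z$-sumset of two sets of cardinality at least two, which is exactly the definition of additive $\Z$-decomposability.

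The only real point to watch is the bundling of all new-difference constraints at step $n$ into the single finite tuple $\Hh_n$ on which to apply the finite tuples property. The symmetry hypothesis $\D=-\D$ is what makes this bundling possible, for it is exactly what converts the ``two-sided'' constraints $a_i-b_n\in\D$ into one-sided shift conditions $t+h\in\D$, so that a single invocation of the finite tuples property delivers a valid $t$ and the induction proceeds without obstruction.
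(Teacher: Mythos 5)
Your proposal is correct and follows essentially the same route as the paper: a greedy induction in which each new pair $(a_n,b_n)=(t,t-e_n)$ is found by bundling all new difference constraints into a single finite tuple (your $\Hh_n$ is exactly the paper's $-\B_k\cup-(d_{k+1}+\A_k)$), with the symmetry of $\D$ used precisely to convert the constraints $a_i-b_n\in\D$ into one-sided shift conditions.
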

The Lemmata 5 and 6 immediately imply Theorem 1.
\begin{proof}
Indeed, let $\D$ be a symmetric subset of $\Z$ satisying the finite tuple property (and therefore infinite), and let $\D=\{d_1,d_2,\dots\}$ be an ordering of $\D$. 
We will construct iteratively increasing sequences $a_k,b_k$ of positive integers
such that $\D=\{a_k\}-\{b_k\}$ and $d_k=a_k-b_k$.
To achieve this, start with any pair $(a_1,b_1)$ such that $d_1=a_1-b_1$.
Assuming finite increasing sequences of positive integers $a_1,\ldots,a_k$ (forming a set $\A_k$) and $b_1,\ldots,b_k$ (forming a set $\B_k$) have already been constructed and satisfy $a_i-b_i=d_i$ and $\A_k-\B_k\subset \D$,
let us construct $a_{k+1}\notin\A_k$ and $b_{k+1}\notin\B_k$
such that $\A_k\cup\{a_{k+1}\}-\B_k\cup\{b_{k+1}\}\subset \D$. Let us look for a positive integer $x$ such that $x-\B_k\subset \D$
and $x-d_{k+1}-\A_k\subset \D$ (by symmetry equivalently 
$-x+d_{k+1}+\A_k\subset \D$).
There exist infinitely many such $x$, due to the finite tuples property applied to the tuple $-\B_k\cup -(d_{k+1}+\A_k)$.
So there exists such an $x$ outside of the finite set $\A_k\cup (d_{k+1}+\B_k)$, and we pick $a_{k+1}=x$ and
$b_{k+1}=x-d_{k+1}$ and we are done.
\end{proof}

\begin{lm}
Any subset $\D$ of $\N$ satisfying the finite tuples property contains a sumset $\A+\B$ where both summands are infinite.
\end{lm}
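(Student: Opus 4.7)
The plan is to mimic the iterative construction used in the proof of Lemma 6, but with addition in $\N$ in place of difference in $\Z$, and without the symmetry constraint. Concretely, I would build, step by step, strictly increasing sequences of positive integers $a_1<a_2<\cdots$ and $b_1<b_2<\cdots$ such that, writing $\A_k=\{a_1,\ldots,a_k\}$ and $\B_k=\{b_1,\ldots,b_k\}$, the invariant $\A_k+\B_k\subset\D$ holds at every stage $k$. Taking $\A=\bigcup_k\A_k$ and $\B=\bigcup_k\B_k$ then produces two infinite subsets of $\N$ with $\A+\B\subset\D$, as required.

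For the base case, pick any $a_1\in\N$ and then, applying the finite tuples property to $\Hh=\{a_1\}$, choose $b_1$ with $a_1+b_1\in\D$. For the inductive step, suppose $\A_k,\B_k$ have already been produced with $\A_k+\B_k\subset\D$. Apply the finite tuples property to $\Hh=\B_k$: there exist infinitely many $n\in\N$ with $n+\B_k\subset\D$, so I can select one strictly larger than $a_k$ and set $a_{k+1}$ equal to it; this guarantees $a_{k+1}+b_j\in\D$ for every $j\leq k$. Next, apply the finite tuples property to $\Hh=\A_{k+1}$: there are infinitely many $m\in\N$ with $m+\A_{k+1}\subset\D$, so pick one strictly larger than $b_k$ and define $b_{k+1}=m$; this guarantees $a_i+b_{k+1}\in\D$ for every $i\leq k+1$.

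A quick verification shows the invariant propagates: for arbitrary indices $i,j$, either $i\leq j$, in which case $a_i+b_j\in\D$ was ensured when $b_j$ was introduced, or $i>j$, in which case it was ensured when $a_i$ was introduced. Hence $\A+\B\subset\D$.

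I do not foresee any serious obstacle: the strength of the finite tuples property (arbitrary finite patterns appear infinitely often) is precisely what is needed to extend the construction indefinitely while keeping both sequences strictly increasing. The only point requiring a small amount of care is the bookkeeping, namely alternating between extending $\A$ and $\B$ and verifying that every cross sum $a_i+b_j$ is covered by some step of the induction; this is handled by the observation above.
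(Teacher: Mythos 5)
Your proposal is correct and is essentially identical to the paper's own proof: both construct $\A$ and $\B$ greedily, alternately choosing $a_{k+1}$ with $a_{k+1}+\B_k\subset\D$ and then $b_{k+1}$ with $b_{k+1}+\A_{k+1}\subset\D$, each step justified by the finite tuples property. The only difference is that you spell out the bookkeeping check that every cross sum is covered, which the paper leaves implicit.
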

Lemmata 5 and 7 imply Theorem 2.
\begin{proof}
We will construct iteratively increasing sequences $a_k,b_k$ of positive integers
such that $\{a_k : k\in\N\}+\{b_k : k\in\N\}\subset \D$.
We start with any pair $(a_1,b_1)$ such that $a_1+b_1\in \D$.
Assuming pairwise distinct $a_1,\ldots,a_k$ (forming a set $\A_k$) and $b_1,\ldots,b_k$ (forming a set $\B_k$) have already been constructed and satisfy $\A_k+\B_k\subset \D$, we first select $a_{k+1}$ to be an
integer outside $A_k$ satisfying $a_{k+1}+B_k\subset \D$, which exists by the finite tuple property.
We then set $\A_{k+1}=\{a_{k+1}\}\cup \A_k$ and take $b_{k+1}\in\N$ outside $\B_k$ such that $b_{k+1}+\A_{k+1}\subset \D$.
\end{proof}
\section{Proof of Theorems 3 and 4}

\begin{proof}[Proof of Theorem 3]
Let $\Dec$ be the set of additively decomposable subsets of $\N$.
% of sets $\mathcal{C}$ of the form $\mathcal{C}=\mathcal{A}+\mathcal{B}$ for some $\mathcal{A},\mathcal{B}\subset\N$ satisfying $ \min(\abs{\mathcal{A}},\abs{\mathcal{B}})\geq 2$.
Since an asymptotically additively decomposable set differs from an element of $\Dec$ by finitely many editions, of which there are countably many, it suffices to show that
$\Pr(\mathcal{C}\in \Dec)=0$.
Let $\Dec_1$ be the  set of sets $\mathcal{C}$ of the form $\mathcal{C}=\mathcal{A}+\mathcal{B}$ for some $\mathcal{A},\mathcal{B}\subset\N$ satisfying $ \min(\abs{\mathcal{A}},\abs{\mathcal{B}})\geq 2$ and
$A(x)+B(x)<\frac{\log 2}{2}\frac{x}{f(x)\log f(x)}$ for infinitely many integers $x$.
Let $\Dec_2$ be the  set of sets $\mathcal{C}$ of the form $\mathcal{C}=\mathcal{A}+\mathcal{B}$ for some $\mathcal{A},\mathcal{B}\subset\N$ satisfying $ 2\leq \min(\abs{\mathcal{A}},\abs{\mathcal{B}})<\infty$.
 Let $\Dec_3=\Dec\setminus (\Dec_1\cup \Dec_2)$ so that $\Dec=\Dec_1\cup\Dec_2\cup \Dec_3$.

First we show that 
$\Pr(\mathcal{C}\in\Dec_1)=0.$
We note that $\Dec_1=\bigcap_{x_0\geq 1}\bigcup_{x\geq x_0}\Dec_1(x)$
where 
$\Dec_1(x)$ is the  set of sets $\mathcal{C}$ of the form $\mathcal{C}=\mathcal{A}+\mathcal{B}$ for some $\mathcal{A},\mathcal{B}\subset\N$ satisfying $ \min(\abs{\mathcal{A}},\abs{\mathcal{B}})\geq 2$ and
$A(x)+B(x)<\frac{\log 2}{2}\frac{x}{f(x)\log f(x)}$.
% where for a set $\mathcal{D}\subset \N$, we let $D(x)=\abs{D\cap [0,x]}.$
Let $x$ be a large integer and $r\leq x/2$.
We note that if $\mathcal{C}=\mathcal{A}+\mathcal{B}$, where $A(x)+B(x)<r$, then $\mathcal{C}\cap [0,x]$ is
one of at most $r\binom{2x}{r}\leq r(2ex/r)^r$ sets.
But for any set in $\mathcal{D}\subset [0,x]$, the probability that $\mathcal{C}\cap [0,x]=\mathcal{D}$ is at most
$\prod_{n : s_{n+1}<x}\delta_n^{-1}\leq 2^{-(1+o(1))x/f(x)}$
by definition of $\mathcal{S}$ and property {\bfseries D1}. Taking $r=\lfloor\frac{\log 2}{2}\frac{x}{f(x)\log f(x)}\rfloor$, we infer
that 
 $\Pr(\mathcal{C}\in\Dec_1(x))=\exp(-\Omega(x/f(x))=o(1)$.
This implies that $\Pr(\mathcal{C}\in\Dec_1)=0.$

Now we seek to show that $\Pr(\mathcal{C}\in \Dec_2)=0.$
Note that every set $\mathcal{D}\in \Dec_2$ satisfies $\limsup \min(d_{k+1}-d_k,d_k-d_{k-1})<\infty$. Indeed, if $\mathcal{D}=\mathcal{A}+\mathcal{B}$ where $\mathcal{B}$ is finite, let $H$ be the largest gap between two consecutive elements of $\mathcal{B}$; then $\min(d_{k+1}-d_k,d_k-d_{k-1})\leq H$ for every $k$.
Further we claim that for any $h$ and $k$ integers,
\begin{equation}
\label{ckckp1}
\Pr(c_k+h=c_{k+1})\leq h^2\eta_k\eta_{k+1}.
\end{equation}
To see this, note that $c_k+h=c_{k+1}$ implies that $\varepsilon_k\in [(s_{k+1}-s_k)/2-h,(s_{k+1}-s_k)/2)$
and similarly $\varepsilon_{k+1}\in [-(s_{k+1}-s_k)/2,-(s_{k+1}-s_k)/2+h]$.
We infer that 
$\Pr(c_{k+1}-c_k\leq H)\leq H^3\eta_k\eta_{k+1}$.
Therefore $$\Pr(\min(c_{k+1}-c_k,c_k-c_{k-1})\leq H)\leq H^3\eta_k(\eta_{k+1}+\eta_{k-1}).$$
Since $\inf\eta_k=0$, it follows that 
$$\Pr(\forall k\,\min(c_{k+1}-c_k,c_k-c_{k-1})\leq H)\leq 
\inf_k \Pr(\min(c_{k+1}-c_k,c_k-c_{k-1})\leq H)=0
$$ 
hence $\Pr(\mathcal{C}\in\Dec_2)=0$.

We turn to $\Dec_3$.
We note that $\Dec_3=\bigcup_{x_0\geq 1}\bigcap_{x\geq x_0}\Dec_3(x)$
where 
$\Dec_3(x)$ is the  set of sets $\mathcal{C}$ of the form $\mathcal{C}=\mathcal{A}+\mathcal{B}$ for some infinite sets $\mathcal{A},\mathcal{B}\subset\N$ satisfying
$A(x)+B(x)\geq\frac{\log 2}{2}\frac{x}{f(x)\log f(x)}$.
Suppose $\mathcal{C}=\mathcal{A}+\mathcal{B}$ where $\mathcal{A}$ and $\mathcal{B}$ are infinite and $A(x)+B(x)\gg x/(f(x)\log f(x))$, in particular $\max(A(x),B(x))\geq \kappa x/(f(x)\log f(x))$ for some constant $\kappa>0$ and every $x$ large enough.
Let $x$ be a large integer and assume without loss of generality that
$A(x)\geq \kappa x/(f(x)\log f(x))$.
Let $b_1<b_2<\ldots <b_\ell$ be in $\mathcal{B}$.
Then we know that for any $n\in \mathcal{A}$ the set $\{n+b_i : i\in [\ell]\}$ lies in $\mathcal{C}$.
So the number of $n\in [0,x]$ such that  $\{n+b_i : i\in [\ell]\}\subset \mathcal{C}$ is at least $A(x)$.
Therefore 
\begin{equation}
\label{cdec3x}
\Pr(\mathcal{C}\in\Dec_3(x))\leq \Pr(\abs{[0,x]\cap \bigcap_{i\in [\ell]} (\mathcal{C}-b_i)}\geq\kappa x/(f(x)\log f(x))).
\end{equation}

Now
suppose $\{n+b_i : i\in [\ell]\}\subset \mathcal{C}$. 
Then
$n+b_i\in \mathcal{C}$ means $n+b_i=c_{j_i}=s_{j_i}+\varepsilon_{j_i}$ for some $j_1<j_2<\ldots<j_\ell$.
However the first property of $\varepsilon_n$ implies easily that either $j_i=j_1+i-1$ for every $i\in [\ell]$,
or $s_{j_i+2}-s_{j_i}<2(b_{i+1}-b_i)$ for some $i\in [\ell]$.
Recall that, by hypothesis, the number of $k$ such that $s_{k+1}\leq x$ and $s_{k+2}-s_k<2h$ is $o_h(x/(f(x)\log f(x))$.
%classical sieve estimates \cite[Theorem 5.7]{HR} implies, for any given integers $0<a<b$ , that the number of primes $p\leq x$
%such that $p+a$ and $p+b$ are primes is at most $O_{a,b}(x/\log^3x)$, so that the number
%of $p_k\leq x$ satisfying $p_{k+2}-p_k<2h$ is $O_h(x/\log^3x)$.

So in total
\begin{align*}
 \E[\abs{\{n\leq x : \{n+b_i : i\in [\ell]\}\subset \mathcal{C}\}}]
&= \sum_{n\leq x} \Pr(\{n+b_i : i\in [\ell]\}\subset \mathcal{C})\\
&\leq \sum_{k:s_{k+1}\leq x+b_\ell}1_{s_{k+2}-s_k<2h}\\
&+\sum_{k:s_{k+1}\leq x+b_1}\Pr(\forall i\in [\ell]\, : c_k+b_i-b_1=c_{k+i-1})
\end{align*}
Recall that $\Pr(c_k+h=c_{k+1})\leq h^2\eta_k\eta_{k+1}$ by equation \eqref{ckckp1}.
Assume $\ell=2\ell'$.
By independence,
\begin{align*}
\Pr(\forall i\in [\ell]\, : c_k+b_i-b_1=c_{k+i-1})&\leq
\Pr(\forall i\in [\ell']\, : c_{k+2i-2}+b_{2i}-b_{2i-1}=c_{k+2i-1})\\
&\ll \prod_{i=0}^{\ell-1}\eta_{k+i}
\end{align*}
Since $\sum_{s_k\leq x}\prod_{i=0}^{\ell-1}\eta_{k+i}=o(x/(f(x)\log f(x)))$,
we obtain that 
$$\E[\abs{\{n\leq x : \{n+b_i : i\in [\ell]\}\subset \mathcal{C}\}}]=o(x/(f(x)\log f(x))).$$
By Markov's inequality, 
we find that 
$$\Pr(\abs{\{n\leq x : \{n+b_i : i\in [\ell]\}\subset \mathcal{C}\}}\geq\kappa x/(f(x)\log f(x)))=o(1).$$
In view of equation \eqref{cdec3x} infer that $\Pr(\mathcal{C}\in\Dec_3(x))=o(1)$ whence $\Pr(\mathcal{C}\in\Dec_3)=0$.
\end{proof}

\begin{proof}[Proof of Theorem 4]
By the prime number theorem, when $\mathcal{S}$ is the set $\mathcal{P}$ of the primes, we have $f_\mathcal{P}(x)\sim \log x$ which proves {\bfseries S1}.
Further, let  $\pi_m(x)$ be the number of primes $p\leq x$ such that 
$p+m\in\mathcal{P}$. Then by Selberg's sieve $\pi_m(x)\ll \frac{x}{\log^2x}\prod_{p\mid m}(1+1/p)$, where the implied constant is absolute; see for instance \cite{HR}.
Then the number of $p_k\leq x$ such that $p_{k+1}\leq p_k+M$
is at most
\begin{equation}
\label{smallgap}
\sum_{m\leq M}\pi_m(x)\ll \frac{x}{\log^2x}
\sum_{m\leq M}\prod_{p\mid m}(1+1/p)=\frac{x}{\log^2x}\sum_{d\leq M}
\lfloor M/d\rfloor \frac{\mu(d)^2}{d}\ll M\frac{x}{\log^2x}.
\end{equation}
In particular
the number of primes $p_k\leq x$ such that $p_{k+2}-p_k\leq h$ is
$O_h(x/f(x)^2)=o_h(x/(f(x)\log f(x)))$ (in fact it is $O_h(x/f(x)^3)$).
Thus {\bfseries S2} holds.

Let the sequence $\delta$ satisfy $\delta_n\leq (s_{n+1}-s_{n-1})/2$ for all $n\in\N$ and
$\delta_n\gg \min((s_{n+1}-s_{n-1})/2,(\log\log n)^\iota )$ for $n\geq 2$  and
some $\iota >0$.
Let us check that this sequence satisfies the properties required above.
{\bfseries D1,3} are obvious.
Applying \eqref{smallgap} with $M= (\log\log x)^\iota $, we see that $\delta_n\gg(\log\log n)^\iota $ for all but $o(x/(f(x)\log f(x)))$ primes $p_n\leq x$.
As a result, {\bfseries D2} is satisfied for any $\ell >\iota^ {-1}$.

Similarly, for the set $\mathcal{S}=2\mathcal{Q}$ of sums of two squares, we have $f_\mathcal{S}(x)\asymp \sqrt{\log x}$ by a classical result of Landau, which proves {\bfseries S1}.
Further, let  $\theta_m(x)$ be the number of sums of two squares $s\leq x$ such that 
$s+m\in\mathcal{S}$. Then again $\theta_m(x)\ll \frac{x}{\log x}\prod_{p\mid m,p\equiv 3\mod 4}(1+1/p)$, where the implied constant is absolute; this may be achieved via Selberg's sieve, see \cite{nowak}.
 Arguing like in equation \eqref{smallgap}, we find that the number of $s_k\leq x$ such that $s_{k+1}\leq s_k+M$
is at most
 $M\frac{x}{\log x}.$
In particular
the number of sums of two squares $s_k\leq x$ such that $s_{k+2}-p_k\leq h$ is
$O_h(x/f(x)^2)=o_h(x/(f(x)\log f(x)))$ (in fact here too, it is $O_h(x/f(x)^3)$).
Thus {\bfseries S2} holds.

Simultaneously this proves \textbf{D1}.
{\bfseries D3} holds by definition and \textbf{D2} may be proven along the same lines as above.
\end{proof}
\section*{Acknowledgments}
This paper was supported by the joint FWF-ANR project Arithrand: FWF: I 4945-N and ANR-20-CE91-0006.
The author would like to thank Christian Elsholtz and Oleksiy Klurman for useful conversations.

\end{document}